\newtheorem{theorem}{Theorem}
\newtheorem{lemma}[theorem]{Lemma}
\newtheorem{corollary}[theorem]{Corollary}
\def\bC{{\mathbb{C}}}
\def\cX{{\mathcal{X}}}
\def\cB{{\mathcal{B}}}
\def\cM{{\mathcal{M}}}
\def\cN{{\mathcal{N}}}
\def\cP{{\mathcal{P}}}
\def\rank{{\operatorname{rank}\,}}
\begin{document}

\title[Distance from a matrix to nilpotents]{On the distance from a matrix to nilpotents}

\author[M. Mori]{Michiya Mori}

\address{Graduate School of Mathematical Sciences, The University of Tokyo, 3-8-1 Komaba, Meguro-ku, Tokyo, 153-8914, Japan; Interdisciplinary Theoretical and Mathematical Sciences Program (iTHEMS), RIKEN, 2-1 Hirosawa, Wako, Saitama, 351-0198, Japan.}
\email{mmori@ms.u-tokyo.ac.jp}

\thanks{The author is supported by JSPS KAKENHI Grant Number 22K13934.}
%\thanks{}
\subjclass[2020]{Primary 15A60; 47A30.} 

\keywords{nilpotent, projection, distance}

\date{}

\begin{abstract}
We prove that the distance from an $n\times n$ complex matrix $M$ to the set of nilpotents is at least $\frac{1}{2}\sec\frac{\pi}{n+2}$ if there is a nonzero projection $P$ such that $PMP=M$ and $M^*M\geq P$.
In the particular case where $M$ equals $P$, this verifies a conjecture by G.W. MacDonald in 1995.
We also confirm a related conjecture in D.A. Herrero's book.
\end{abstract}

\maketitle
\thispagestyle{empty}

\section{Introduction}
Throughout this note, $n$ is a positive integer. 
Let $\cM_n$ denote the set of $n\times n$ complex matrices, and let $\cN_n$ denote the subset of $n\times n$ nilpotent matrices. 
In the current paragraph, we assume that $A, B\in \cM_n$.
We endow $\cM_n$ with the usual distance $d$ that comes from the matrix (operator) norm, that is, $d(A, B)=\lVert A-B\rVert$.
Let $\nu(A)$ stand for the distance $\inf_{N\in \cN_n} d(A, N)$ between $A$ and $\cN_n$. 
Some properties of $\nu$ can be derived easily. 
For example, the inequality $\lvert\nu(A)-\nu(B)\rvert \leq \lVert A-B\rVert$ holds.
We have $\nu(\lambda A)=\lvert \lambda\rvert \nu(A)$ for every $\lambda\in \mathbb{C}$.
%Let $U$ be a unitary matrix in $\cM_n$. Then $\nu(U)=1$.
If $U$ is a unitary in $\cM_n$, then $\nu(A) =\nu(UAU^*)$. 
Let the symbol $\rho(A)$ denote the spectral radius of $A$. 
Since every matrix is unitarily equivalent to an upper triangular matrix (Schur triangulation), we obtain $\nu(A)\leq \rho(A)$. 
%See \cite{AS} for related results.

However, in general, it is apparently very hard to determine the precise value of $\nu(A)$ for a matrix $A$.
The following question concerning $\nu$ has long been studied in the literature: 
What is the infimum of $\nu(P)$ over all nonzero projections (hermitian idempotents) $P$ in $\cM_n$?
For $0\leq m\leq n$, let $\cP_{n,m}$ denote the set of projections in $\cM_n$ of rank $m$.
To the best of our knowledge, the first research on the above question was given in \cite[Section 1]{Hed} a half century ago, in which Hedlund writes that the determination of the precise value seems difficult.
More than 20 years later, MacDonald proved that $\nu(P)=\frac{1}{2}\sec\frac{\pi}{n+2}$ for $P\in \cP_{n,1}$, and  conjectured that $\nu(P)\geq \frac{1}{2}\sec\frac{\pi}{n+2}$ holds for every nonzero projection $P$ in $\cM_n$ \cite{M1}.
He verified it for $n\leq 4$ \cite[Lemma 3.3]{M2}.
In the recent paper \cite{C}, Cramer conjectured that $\nu(P)=\frac{1}{2}\sec\frac{\pi}{\frac{n}{m} + 2}$ for every $1\leq m\leq n$ and $P\in \cP_{n, m}$, and proved that this is the case when $m=n-1$. 
For more information around this topic, see \cite[Chapter 2]{Her5}, \cite{C}, \cite{M2}.
The main purpose of this note is to verify MacDonald's conjecture.

We also consider a similar problem concerning operators on a (possibly infinite-dimensional) complex Hilbert space $H$.
Let $\cB(H)$ denote the set of bounded linear operators on $H$.
For $A\in \cB(H)$, let $\nu_n(A)$ stand for the distance between $A$ and the set $\{N\in \cB(H)\mid N^n=0\}$. 
We show that $\nu_n(P)\geq \frac{1}{2}\sec\frac{\pi}{n+2}$ for every nonzero projection $P\in \cB(H)$. 
This gives a proof to a conjecture posed by Herrero in the first part of \cite[Conjecture 2.16]{Her5}. Note that the latter part of \cite[Conjecture 2.16]{Her5} is not correct, see \cite[page 849]{M1}. 

In fact, below we prove Theorems \ref{t1} and \ref{t2}, which are much more general than MacDonald's conjecture and Herrero's respectively.
\section{Proofs}
When $A$ and $B$ are hermitian matrices (operators), the symbol $A\leq B$ means that $B-A$ is positive semidefinite.
If $0\leq A$, then $A^{1/2}$ stands for the positive square root of $A$.
For a projection $P$ in $\cM_n$, we define $P^\perp:=I_n-P$.
Let $\Pi_n$ denote the set of $(n+1)$-tuples of projections $(P_0, P_1, \ldots, P_n)$ satisfying $P_0\leq P_1\leq \cdots \leq P_n$ and $P_k\in \cP_{n, k}$ for every $0\leq k\leq n$.
Recall that a matrix $N\in \cM_n$ is nilpotent if and only if $N^n=0$, and this is equivalent to the existence of $(P_0, P_1, \ldots, P_n)\in \Pi_n$ such that $P_{k-1}^\perp NP_k=0$ for every $k$.
Thus, Power's version \cite[Lemma]{P} of Arveson's distance formula \cite{A} gives the following equation (see also \cite[Theorem 2]{M1}): For $A\in \cM_n$,
\[
\nu(A)=\inf\{\max_{1\leq k\leq n}\lVert P_{k-1}^\perp AP_k\rVert\mid (P_0, P_1, \ldots, P_n)\in \Pi_n\}.
\] 

\begin{lemma}\label{l0}
Let $X, Y\in \cM_n$. Then $\lVert XY\rVert= \lVert (X^*X)^{1/2}(YY^*)^{1/2}\rVert$.
\end{lemma}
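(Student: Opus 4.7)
The plan is to use the fundamental C*-identity $\lVert T\rVert^2=\lVert TT^*\rVert$ to replace each side by a positive-semidefinite factor of the same norm. As a first step I would establish an auxiliary identity that does half the work: for all $A,B\in\cM_n$,
\[
\lVert AB\rVert=\lVert A(BB^*)^{1/2}\rVert.
\]
Setting $T:=A(BB^*)^{1/2}$, this follows from
\[
\lVert AB\rVert^2=\lVert ABB^*A^*\rVert=\lVert TT^*\rVert=\lVert T\rVert^2,
\]
where the middle equality uses that $(BB^*)^{1/2}$ is its own adjoint and squares to $BB^*$.

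With this identity at hand, the lemma should fall out of two applications interspersed with the trivial fact $\lVert T\rVert=\lVert T^*\rVert$. First apply the auxiliary identity with $A=X$, $B=Y$ to obtain $\lVert XY\rVert=\lVert X(YY^*)^{1/2}\rVert$. Taking adjoints gives $\lVert X(YY^*)^{1/2}\rVert=\lVert (YY^*)^{1/2}X^*\rVert$. Now apply the auxiliary identity again, this time with $A=(YY^*)^{1/2}$ and $B=X^*$, to get $\lVert (YY^*)^{1/2}X^*\rVert=\lVert (YY^*)^{1/2}(X^*X)^{1/2}\rVert$. One more adjoint, using that both factors are Hermitian, yields $\lVert (YY^*)^{1/2}(X^*X)^{1/2}\rVert=\lVert (X^*X)^{1/2}(YY^*)^{1/2}\rVert$, which is the claim.

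I do not foresee a genuine obstacle: the whole argument is a short chain of manipulations relying only on $\lVert T\rVert=\lVert T^*\rVert$, $\lVert T\rVert^2=\lVert TT^*\rVert$, and the defining property of the positive square root. The one thing to be careful about is the correct placement of $(BB^*)^{1/2}$ versus $(B^*B)^{1/2}$ in the auxiliary identity: these are generally different, and the trick specifically pulls out a factor depending on the \emph{right} end of the product, which is why we must invoke the identity twice (with an intervening adjoint) in order to land on $(X^*X)^{1/2}$ on the left and $(YY^*)^{1/2}$ on the right.
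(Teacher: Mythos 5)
Your proof is correct, but it follows a different route from the paper's. The paper proves the lemma in one stroke via polar decompositions: writing $X=V(X^*X)^{1/2}$ and $Y=(YY^*)^{1/2}W$ with $V,W$ partial isometries, one gets $XY=V(X^*X)^{1/2}(YY^*)^{1/2}W$ and, conversely, $(X^*X)^{1/2}(YY^*)^{1/2}=V^*XYW^*$; since multiplying by partial isometries cannot increase the norm, the two expressions have equal norm. You instead avoid polar decomposition altogether and rely only on the C*-identity $\lVert T\rVert^2=\lVert TT^*\rVert$, applied twice through your auxiliary identity $\lVert AB\rVert=\lVert A(BB^*)^{1/2}\rVert$ with an intervening adjoint; your bookkeeping of $(BB^*)^{1/2}$ versus $(B^*B)^{1/2}$ is right, and each link in your chain checks out. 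The trade-off: the paper's argument is shorter once polar decomposition is taken for granted and makes the symmetry of the two sides transparent, while yours is more elementary in the sense that it uses nothing beyond the C*-identity and Hermitian square roots, and hence transfers verbatim to an arbitrary C*-algebra, where polar decompositions (i.e., the partial isometries $V,W$) need not exist inside the algebra. For the purposes of this paper either proof suffices.
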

\begin{proof}
By considering polar decompositions of $X$ and $Y$, we get partial isometries $V, W$ such that 
$XY =V(X^*X)^{1/2}(YY^*)^{1/2}W$ and $(X^*X)^{1/2}(YY^*)^{1/2}=V^*XYW^*$. 
This leads to the desired equation.
\end{proof}

\begin{lemma}\label{l1}
Assume that $0\leq A\in \cM_n$. Let  $\Pi(A)$ denote the set of $(n+1)$-tuples of matrices $(A_0, A_1, \ldots, A_n)$ satisfying $0=A_0\leq A_1\leq \cdots\leq A_n=A$ and $\rank (A_k-A_{k-1})\leq 1$ for every $1\leq k\leq n$.
Then 
\[
\nu(A)=\alpha:=\inf\{\max_{1\leq k\leq n}\lVert (A-A_{k-1})^{1/2}A_k^{1/2}\rVert\mid(A_0, A_1, \ldots, A_n)\in \Pi(A)\}.
\] 
\end{lemma}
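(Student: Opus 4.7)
The plan is to use Lemma~\ref{l0} to identify, term by term, the Power-Arveson formula in the paragraph preceding the statement with the expression appearing in Lemma~\ref{l1}, via the correspondence $A_k = A^{1/2} P_k A^{1/2}$. Applying Lemma~\ref{l0} to $X = P_{k-1}^\perp A^{1/2}$ and $Y = A^{1/2} P_k$, for which $X^*X = A^{1/2} P_{k-1}^\perp A^{1/2} = A - A_{k-1}$ and $YY^* = A^{1/2} P_k A^{1/2} = A_k$, one obtains the key identity
\[
\lVert P_{k-1}^\perp A P_k\rVert = \lVert (A-A_{k-1})^{1/2} A_k^{1/2}\rVert.
\]
Hence it suffices to show that the assignment $(P_k) \mapsto (A^{1/2} P_k A^{1/2})$ is a surjection from $\Pi_n$ onto $\Pi(A)$.

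One direction is immediate: given $(P_0, \ldots, P_n) \in \Pi_n$, setting $A_k := A^{1/2} P_k A^{1/2}$ produces $A_0 = 0$, $A_n = A$, and $A_k - A_{k-1} = A^{1/2}(P_k - P_{k-1}) A^{1/2}$, which has rank at most one since $P_k - P_{k-1}$ is a rank-one projection. Combined with Power-Arveson and the displayed identity, this gives $\alpha \leq \nu(A)$.

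For the converse, I must produce, for each $(A_0, \ldots, A_n) \in \Pi(A)$, a chain $(P_0, \ldots, P_n) \in \Pi_n$ with $A_k = A^{1/2} P_k A^{1/2}$. Write $A_k - A_{k-1} = u_k u_k^*$; since $u_k u_k^* \leq A$, each $u_k$ lies in $\operatorname{range}(A^{1/2})$, so there is a unique $g_k \in (\ker A^{1/2})^\perp$ with $A^{1/2} g_k = u_k$. I claim one can find an orthonormal basis $h_1, \ldots, h_n$ of $\bC^n$ with $A^{1/2} h_k = u_k$ for every $k$. Granting this, the projections $P_k := \sum_{j \leq k} h_j h_j^*$ lie in $\cP_{n,k}$ and satisfy $A^{1/2} P_k A^{1/2} = \sum_{j \leq k} u_j u_j^* = A_k$, so the displayed identity and Power-Arveson yield $\nu(A) \leq \alpha$.

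The technical core is the existence of this ONB. Observe that $A^{1/2}\bigl(\sum_k g_k g_k^*\bigr) A^{1/2} = A$, together with $g_k \in (\ker A^{1/2})^\perp$, forces $\sum_k g_k g_k^* = E$, the orthogonal projection onto $\operatorname{range}(A^{1/2})$. Setting $F := [g_1\;\cdots\;g_n]$, the Gram matrix $G := F^*F$ and $FF^* = E$ share the same nonzero spectrum, so $G$ is itself a projection of rank $r := \rank A$. Consequently $I - G$ is a projection of rank $n - r = \dim \ker A^{1/2}$, hence realizable as the Gram matrix of some family $\xi_1, \ldots, \xi_n \in \ker A^{1/2}$. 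The vectors $h_k := g_k + \xi_k$ are orthonormal by orthogonality of $\operatorname{range}(A^{1/2})$ and $\ker A^{1/2}$, and $A^{1/2} h_k = u_k$. The main obstacle is precisely this construction in the non-invertible case; when $A$ is invertible one simply takes $h_k := A^{-1/2} u_k$ and uses $\sum h_k h_k^* = A^{-1/2} A A^{-1/2} = I$ to conclude the $h_k$'s form an ONB.
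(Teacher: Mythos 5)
Your proof is correct, and the first half (deriving $\alpha\le\nu(A)$ from Lemma \ref{l0} applied to $X=P_{k-1}^\perp A^{1/2}$, $Y=A^{1/2}P_k$) coincides with the paper's; but your treatment of the reverse inequality takes a genuinely different route. The paper does not try to show that $(P_k)\mapsto(A^{1/2}P_kA^{1/2})$ is onto $\Pi(A)$. Instead, given $(A_0,\dots,A_n)\in\Pi(A)$, it perturbs the increments to rank-one positives $C_k$ whose sum $B$ is invertible, sets $P_k=B^{-1/2}B_kB^{-1/2}$ (a rank-counting argument forces these to be genuine projections), and then uses the Lipschitz estimate $\lvert\nu(B)-\nu(A)\rvert\le\lVert B-A\rVert$ to let $\varepsilon\to0$. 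You instead prove the exact surjectivity statement: writing $A_k-A_{k-1}=u_ku_k^*$ with $u_k\in\operatorname{range}(A^{1/2})$, lifting to $g_k$ with $A^{1/2}g_k=u_k$, showing $\sum_k g_kg_k^*$ equals the range projection $E$ of $A$, and completing the $g_k$ to an orthonormal basis $h_k=g_k+\xi_k$ by realizing the complementary projection $I-G$ of the Gram matrix inside $\ker A^{1/2}$ (possible precisely because $\operatorname{rank}(I-G)=\dim\ker A^{1/2}$). All the steps check out: $u_k\perp\ker A$ follows from $u_ku_k^*\le A$, the identity $\sum_k g_kg_k^*=E$ follows since the quadratic form of $\sum_k g_kg_k^*-E$ vanishes on $\operatorname{range}(A^{1/2})$, and the Gram-completion is a standard factorization of a positive semidefinite matrix of the right rank. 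Your approach buys an exact, $\varepsilon$-free bijective correspondence of the two infima and avoids invoking the continuity of $\nu$; the paper's perturbation argument is shorter to write and sidesteps the Gram-completion construction at the cost of an approximation step. Both are valid.
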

\begin{proof}
Let $(P_0, P_1, \ldots, P_n)\in \Pi_n$.
For each $k$, Lemma \ref{l0} with $X=P_{k-1}^\perp A^{1/2}$ and $Y=A^{1/2}P_k$ shows
$\lVert P_{k-1}^\perp AP_k\rVert= \lVert (A-A_{k-1})^{1/2}A_k^{1/2}\rVert$,
where $A_k=A^{1/2}P_kA^{1/2}$. 
It is easy to see $(A_0, A_1, \ldots, A_n)\in \Pi(A)$, thus we obtain $\alpha\leq \nu(A)$.

To get the other inequality, assume that $(A_0, A_1, \ldots, A_n)\in \Pi(A)$. 
Let $\varepsilon>0$. 
Take a matrix $C_k\geq 0$ of rank one that is close to $A_k-A_{k-1}$ for each $k$ such that $B=C_1+C_2+\cdots +C_n$ has rank $n$.  
We may assume $\lVert B-A\rVert<\varepsilon$ and $\lVert (B-B_{k-1})^{1/2}B_k^{1/2}  -  (A-A_{k-1})^{1/2}A_k^{1/2}\rVert<\varepsilon$, where $B_0 =0$ and $B_k=C_1+C_2+\cdots+ C_k$,  for every $k\in \{1, \ldots, n\}$.
Thus $\lVert  (A-A_{k-1})^{1/2}A_k^{1/2}\rVert>\lVert (B-B_{k-1})^{1/2} B_k^{1/2}\rVert -\varepsilon$.
Since $B$ has rank $n$,  $B^{1/2}$ is invertible. 
Set $P_k:= B^{-1/2}B_kB^{-1/2}$ for each $k$. 
Then we have $0=P_0\leq P_1\leq \cdots\leq P_n=I_n$ and $\rank (P_k-P_{k-1})\leq 1$ for every $1\leq k\leq n$.
It follows that $\rank P_k\leq k$ and $\rank (I_n -P_k)\leq n-k$, which imply $(P_0, P_1, \ldots, P_n)\in \Pi_n$.
Lemma \ref{l0} implies $\lVert (B-B_{k-1})^{1/2}B_k^{1/2}\rVert =\lVert P_{k-1}^\perp BP_k\rVert$ for each $k$.
By
$\lvert \nu(B)-\nu(A)\lvert\leq \lVert B-A\lVert <\varepsilon$, we obtain 
\[
\max_{1\leq k\leq n}\lVert (A-A_{k-1})^{1/2}A_k^{1/2} \rVert > \max_{1\leq k\leq n}\lVert P_{k-1}^\perp BP_k\rVert-\varepsilon \geq \nu(B) -\varepsilon > \nu(A)-2\varepsilon.
\]
Since $\varepsilon>0$ is arbitrary, we get $\alpha\geq \nu(A)$.
\end{proof}

Essentially the same proof gives 
\begin{lemma}\label{l2}
Let $0\leq A\in \cM_n$ and $X\in \cM_n$. 
Then 
\[
\nu(A^{1/2}XA^{1/2})=\inf\{\max_{1\leq k\leq n}\lVert (A-A_{k-1})^{1/2}(XA_kX^*)^{1/2}\rVert\mid(A_0, A_1, \ldots, A_n)\in \Pi(A)\}.
\] 
\end{lemma}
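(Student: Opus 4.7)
The plan is to follow the proof of Lemma \ref{l1} essentially verbatim, modifying only the choice of the second factor in the application of Lemma \ref{l0}. Write $\beta$ for the infimum on the right-hand side of the claimed identity. For the inequality $\beta\leq\nu(A^{1/2}XA^{1/2})$, I would start with $(P_0,\ldots,P_n)\in\Pi_n$ and apply Lemma \ref{l0} to $X_0:=P_{k-1}^\perp A^{1/2}$ and $Y_0:=XA^{1/2}P_k$. Then $X_0^*X_0=A^{1/2}P_{k-1}^\perp A^{1/2}=A-A_{k-1}$ and $Y_0Y_0^*=XA^{1/2}P_kA^{1/2}X^*=XA_kX^*$ with $A_k:=A^{1/2}P_kA^{1/2}$, so Lemma \ref{l0} gives
\[
\lVert P_{k-1}^\perp (A^{1/2}XA^{1/2})P_k\rVert=\lVert(A-A_{k-1})^{1/2}(XA_kX^*)^{1/2}\rVert.
\]
Since $(A_0,\ldots,A_n)\in\Pi(A)$, taking the infimum over $\Pi_n$ on the left and using the Arveson--Power formula applied to $A^{1/2}XA^{1/2}$ yields $\nu(A^{1/2}XA^{1/2})\geq\beta$.

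For the reverse inequality I would reproduce the perturbation step of Lemma \ref{l1}. Given $(A_0,\ldots,A_n)\in\Pi(A)$ and $\varepsilon>0$, perturb each $A_k-A_{k-1}$ to a nearby rank-one positive matrix $C_k$ so that $B:=C_1+\cdots+C_n$ has rank $n$, and set $B_k:=C_1+\cdots+C_k$ and $P_k:=B^{-1/2}B_kB^{-1/2}$; the verification that $(P_0,\ldots,P_n)\in\Pi_n$ is identical to the one in Lemma \ref{l1}. By the norm-continuity of the square root on positive matrices, I can arrange that
\[
\bigl\lVert (B-B_{k-1})^{1/2}(XB_kX^*)^{1/2}-(A-A_{k-1})^{1/2}(XA_kX^*)^{1/2}\bigr\rVert<\varepsilon
\]
and $\lVert B^{1/2}XB^{1/2}-A^{1/2}XA^{1/2}\rVert<\varepsilon$. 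Lemma \ref{l0} applied with $X_0:=P_{k-1}^\perp B^{1/2}$ and $Y_0:=XB^{1/2}P_k$ gives $\lVert(B-B_{k-1})^{1/2}(XB_kX^*)^{1/2}\rVert=\lVert P_{k-1}^\perp B^{1/2}XB^{1/2}P_k\rVert$, and combining this with the $1$-Lipschitz property $\lvert\nu(C)-\nu(C')\rvert\leq\lVert C-C'\rVert$ applied to $C=A^{1/2}XA^{1/2}$, $C'=B^{1/2}XB^{1/2}$, yields
\[
\max_{k}\lVert(A-A_{k-1})^{1/2}(XA_kX^*)^{1/2}\rVert>\nu(A^{1/2}XA^{1/2})-2\varepsilon.
\]
Letting $\varepsilon\to 0$ gives $\beta\geq\nu(A^{1/2}XA^{1/2})$.

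I do not expect a genuine obstacle here: the only structural change compared with Lemma \ref{l1} is the insertion of $X$ (respectively $X^*$) next to the second positive factor in Lemma \ref{l0}, which turns $A_k^{1/2}$ into $(XA_kX^*)^{1/2}$ but leaves all norm-continuity and rank considerations intact, since $A_k\mapsto XA_kX^*$ is linear and the square root is norm-continuous on the positive cone. As a consistency check, setting $X=I_n$ recovers Lemma \ref{l1} exactly, so a careful copy of that proof with the modified invocation of Lemma \ref{l0} should suffice.
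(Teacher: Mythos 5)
Your proposal is correct and coincides with the paper's intended argument: the paper proves Lemma \ref{l2} only by the remark ``Essentially the same proof gives,'' referring to Lemma \ref{l1}, and your write-up carries out exactly that adaptation, with the correct substitution $Y_0=XA^{1/2}P_k$ (resp.\ $Y_0=XB^{1/2}P_k$) in Lemma \ref{l0} so that $Y_0Y_0^*=XA_kX^*$ (resp.\ $XB_kX^*$). Both directions and the perturbation step are handled as in the paper, so nothing further is needed.
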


We fix $Q\in \cP_{n, 1}$. Recall that MacDonald gave $\nu(Q)=\frac{1}{2}\sec\frac{\pi}{n+2}$ \cite{M1}.

\begin{theorem}\label{t1}
If $M\in \cM_n$ and there is a nonzero projection $P\in \cM_n$ such that $PMP=M$ and $M^*M\geq P$, then $\nu(M)\geq \nu(Q)$.
\end{theorem}
\begin{proof}
Since $Q$ is of rank $1$, we obtain
\[
\Pi(Q) =\{(c_0Q, c_1Q, \ldots, c_nQ)\mid 0=c_0\leq c_1\leq \cdots\leq c_n=1\}.
\]
Hence Lemma \ref{l1} implies
\[
\nu(Q)=\inf\{\max_{1\leq k\leq n} \sqrt{c_k(1-c_{k-1})}\mid 0=c_0\leq c_1\leq \cdots\leq c_n=1\}.
\]

Let $(A_0, A_1, \ldots, A_n)\in \Pi(P)$ and set $a_k = \lVert A_k\rVert$ for each $k$. 
Since $0=A_0\leq A_1\leq \cdots\leq A_n=P$, the inequality $0=a_0\leq a_1\leq \cdots\leq a_n= 1$ holds. 
For each $k$, we have
\[
\begin{split}
\lVert (P-A_{k-1})^{1/2}(MA_kM^*)^{1/2}\rVert^2&=\lVert (MA_kM^*)^{1/2}(P-A_{k-1})(MA_kM^*)^{1/2}\rVert\\
&\geq \lVert (MA_kM^*)^{1/2}(1-a_{k-1})P(MA_kM^*)^{1/2}\rVert\\
&=  (1-a_{k-1})\lVert MA_kM^*\rVert.
\end{split}
\]
Moreover, the assumption $M^*M\geq P$ implies 
\[
\begin{split}
\lVert MA_kM^*\rVert &= \lVert (MA_k^{1/2})(MA_k^{1/2})^*\rVert = \lVert (MA_k^{1/2})^*(MA_k^{1/2})\rVert = \lVert A_k^{1/2}M^*MA_k^{1/2}\rVert\\
&\geq \lVert A_k^{1/2}PA_k^{1/2}\rVert =  \lVert A_k\rVert =a_k.
\end{split}
\]
It follows that
$\max_{1\leq k\leq n} \lVert (P-A_{k-1})^{1/2}(MA_kM^*)^{1/2}\rVert\geq  \max_{1\leq k\leq n}\sqrt{a_k(1-a_{k-1})}$.
Lemma \ref{l2} with $A=P$ and $X=M$ together with the preceding paragraph leads us to $\nu(M)=\nu(P^{1/2}MP^{1/2})\geq \nu(Q)$.
\end{proof}

If $\rank P\, (=\rank M)=m$ in the above proof, then $a_{n-m+1}=1$ since $\rank (A_k-A_{k-1})\leq 1$ for each $k$. 
Thus we may actually get $\nu(M)\geq \nu(R)$ with $R\in \cP_{n-m+1, 1}$, or equivalently, $\nu(M)\geq \frac{1}{2}\sec\frac{\pi}{n-m+3}$.

Note that the assumption on $M$ in Theorem \ref{t1} is satisfied if and only if $M$ is unitarily equivalent to a block matrix of the form $\begin{pmatrix}M_0&0\\0&0\end{pmatrix}$ for some $M_0\in \cM_m$, $1\leq m\leq n$, with $M_0^*M_0\geq I_m$. 
In particular, every nonzero normal matrix whose spectrum is contained in $\{0\}\cup \{z\in \mathbb{C}\mid \lvert z\rvert\geq 1\}$ satisfies the assumption of Theorem \ref{t1}.
\medskip

Let $H$ be a Hilbert space.
Let $\Phi_n(H)$ denote the set of $(n+1)$-tuples of projections $(P_0, P_1, \ldots, P_n)$ satisfying $0=P_0\leq P_1\leq \cdots \leq P_n=I\in \cB(H)$. 
As in the case of $\cM_n$, for $A\in \cB(H)$, the Arveson-type distance formula \cite[Lemma]{P} shows
\[
\nu_n(A) = \inf\{\max_{1\leq k\leq n}\lVert P_{k-1}^\perp AP_k\rVert\mid (P_0, P_1, \ldots, P_n)\in \Phi_n(H)\}.
\] 
Using this and imitating the first half of the proof of Lemma \ref{l1}, we obtain 
\begin{lemma}
Let $0\leq A\in \cB(H)$ and $X\in \cB(H)$. Let $\Phi_n(A)$ denote the set of $(n+1)$-tuples of operators $(A_0, A_1, \ldots, A_n)$ with $0=A_0\leq A_1\leq \cdots\leq A_n=A$.
Then
\[
\nu_n(A^{1/2}XA^{1/2})\geq \inf\{\max_{1\leq k\leq n}\lVert (A-A_{k-1})^{1/2}(XA_kX^*)^{1/2}\rVert\mid(A_0, A_1, \ldots, A_n)\in \Phi_n(A)\}.
\] 
\end{lemma}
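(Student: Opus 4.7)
The plan is to imitate the first half of the proof of Lemma \ref{l1} in the setting of $\cB(H)$, using the Arveson-type distance formula for $\nu_n$ recalled immediately above the lemma statement. I expect only a one-sided bound here, in contrast to Lemma \ref{l1}, because the second half of that earlier proof relied on finite-rank tricks (perturbing to an invertible $B^{1/2}$ and rank-counting to force $(P_k)\in \Pi_n$) that have no direct analogue in infinite dimensions.

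The main step is as follows. I would start with an arbitrary tuple $(P_0, P_1, \ldots, P_n)\in \Phi_n(H)$ and set $A_k := A^{1/2} P_k A^{1/2}$ for each $k$; since conjugation by $A^{1/2}$ preserves the operator ordering, the chain $(A_0, A_1, \ldots, A_n)$ automatically lies in $\Phi_n(A)$. Next, I would invoke Lemma \ref{l0} applied to $X' := P_{k-1}^\perp A^{1/2}$ and $Y' := X A^{1/2} P_k$. The identities $X'Y' = P_{k-1}^\perp (A^{1/2} X A^{1/2}) P_k$, $X'^{*}X' = A - A_{k-1}$ and $Y'Y'^{*} = X A_k X^{*}$ then give
\[
\lVert P_{k-1}^\perp (A^{1/2} X A^{1/2}) P_k \rVert = \lVert (A - A_{k-1})^{1/2} (X A_k X^{*})^{1/2} \rVert
\]
for every $k$. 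Taking the maximum over $k$, bounding the right-hand side below by the infimum over $\Phi_n(A)$, and then taking the infimum of the left-hand side over $\Phi_n(H)$ via the Arveson-type formula yields $\nu_n(A^{1/2} X A^{1/2})$ on the left and the claimed lower bound on the right.

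The only point that requires a brief justification is that Lemma \ref{l0}, originally stated for $\cM_n$, carries over verbatim to $\cB(H)$: bounded operators admit polar decompositions with partial isometries, and the proof given there transcribes word for word. Beyond this routine extension I do not anticipate any real obstacle; the argument is genuinely a transcription of the easier half of Lemma \ref{l1}, and the failure to obtain equality is precisely due to the absence of the reverse construction $(A_k)\mapsto (P_k)$ that was available in finite dimensions.
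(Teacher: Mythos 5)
Your proposal is correct and matches the paper's intended argument exactly: the paper proves this lemma by "imitating the first half of the proof of Lemma \ref{l1}" via the Arveson-type formula, which is precisely your construction $A_k = A^{1/2}P_kA^{1/2}$ combined with Lemma \ref{l0} applied to $P_{k-1}^\perp A^{1/2}$ and $XA^{1/2}P_k$. Your remarks on why only one inequality survives in infinite dimensions and on the routine extension of Lemma \ref{l0} to $\cB(H)$ are also accurate.
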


Let $\cX(H)$ denote the set of operators $M\in \cB(H)$ with the property that there is a nonzero projection $P\in \cB(H)$ satisfying $PMP=M$ and $M^*M\geq P$.
Imitating the proof of Theorem \ref{t1}, we obtain 

\begin{theorem}\label{t2}
If $M\in \cX(H)$, then $\nu_n(M)\geq \nu(Q)$.
\end{theorem}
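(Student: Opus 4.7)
My plan is to imitate the proof of Theorem \ref{t1} almost verbatim, using the infinite-dimensional lemma stated just above Theorem \ref{t2} in place of Lemma \ref{l2}. The starting point is the observation that $P^{1/2}=P$ and the hypothesis $PMP=M$ yield $M=P^{1/2}MP^{1/2}$, so the lemma applied with $A=P$ and $X=M$ gives
\[
\nu_n(M)\geq \inf\Bigl\{\max_{1\leq k\leq n}\lVert (P-A_{k-1})^{1/2}(MA_kM^*)^{1/2}\rVert\,\Big|\,(A_0,\ldots,A_n)\in\Phi_n(P)\Bigr\}.
\]
So the task reduces to bounding each term in the max, for an arbitrary chain $(A_0,\ldots,A_n)\in\Phi_n(P)$, from below by $\sqrt{a_k(1-a_{k-1})}$ for a suitable nondecreasing sequence $0=a_0\leq\cdots\leq a_n=1$.

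Fix such a chain and set $a_k:=\lVert A_k\rVert$; then $a_0=0$, $a_n=\lVert P\rVert=1$ (here the nonzero projection hypothesis is used), and monotonicity of the $A_k$'s gives $a_0\leq\cdots\leq a_n$. Since $A_k\leq P$ forces $A_k=PA_kP$, the identification of $A_k$ as an operator on the range of $P$ combined with $\lVert A_k\rVert=a_k$ yields the operator inequality $A_k\leq a_k P$ on all of $H$. This supplies the first key estimate exactly as in Theorem \ref{t1}:
\[
\lVert (P-A_{k-1})^{1/2}(MA_kM^*)^{1/2}\rVert^2=\lVert (MA_kM^*)^{1/2}(P-A_{k-1})(MA_kM^*)^{1/2}\rVert\geq (1-a_{k-1})\lVert MA_kM^*\rVert,
\]
where after using $P-A_{k-1}\geq (1-a_{k-1})P$ one exploits $MP=PM=M$ (a consequence of $PMP=M$) to see that $P$ acts as the identity on the support of $MA_kM^*$.

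For the second estimate, the identity $A_k^{1/2}PA_k^{1/2}=A_k$ (valid because $A_k\leq P$ implies $A_k^{1/2}P=A_k^{1/2}$) together with the hypothesis $M^*M\geq P$ gives
\[
\lVert MA_kM^*\rVert=\lVert A_k^{1/2}M^*MA_k^{1/2}\rVert\geq \lVert A_k^{1/2}PA_k^{1/2}\rVert=\lVert A_k\rVert=a_k.
\]
Combining the two estimates produces $\max_k\lVert (P-A_{k-1})^{1/2}(MA_kM^*)^{1/2}\rVert\geq \max_k\sqrt{a_k(1-a_{k-1})}$, and taking the infimum over all chains in $\Phi_n(P)$ gives a lower bound of $\inf\max_k\sqrt{c_k(1-c_{k-1})}$, the infimum running over $0=c_0\leq\cdots\leq c_n=1$. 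By MacDonald's formula combined with the computation of $\nu(Q)$ recalled at the start of the proof of Theorem \ref{t1}, this last quantity equals $\nu(Q)$, completing the proof.

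I do not anticipate a serious obstacle: the argument is a transcription of Theorem \ref{t1} once one replaces Lemma \ref{l2} with its Hilbert space analogue. The two small points that require attention are (i) verifying $A_k\leq a_kP$ directly from $A_k\leq P$ and the norm identity (since the $A_k$ are no longer rank-controlled and the finite-dimensional spectral-theoretic shortcut is unavailable), and (ii) noting that $MP=PM=M$ follows from $PMP=M$, which is needed to identify $\lVert (MA_kM^*)^{1/2}P(MA_kM^*)^{1/2}\rVert$ with $\lVert MA_kM^*\rVert$.
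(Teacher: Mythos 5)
Your proposal is correct and is exactly the argument the paper intends: the paper's "proof" of Theorem \ref{t2} is literally the instruction to imitate Theorem \ref{t1} using the Hilbert-space lemma, and you carry this out faithfully, including the two points (the bound $A_k\leq a_kP$ without rank assumptions, and $MP=PM=M$) that need checking in the infinite-dimensional setting.
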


\begin{corollary}
If $\dim H\geq n$, then the distance $\delta$ from $\cX(H)$ to $\{N\in \cB(H)\mid N^n=0\}$ is $\nu(Q)$.
\end{corollary}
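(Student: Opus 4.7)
The plan is to show the two inequalities $\delta \geq \nu(Q)$ and $\delta \leq \nu(Q)$ separately, with the first being essentially a restatement of Theorem~\ref{t2} and the second reducing to the finite-dimensional equality $\nu(Q) = \frac{1}{2}\sec\frac{\pi}{n+2}$ from MacDonald.

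For the lower bound, note that by definition $\delta = \inf\{\lVert M - N\rVert : M \in \cX(H),\, N^n = 0\}$, which can be rewritten as $\delta = \inf_{M \in \cX(H)} \nu_n(M)$. Theorem~\ref{t2} gives $\nu_n(M) \geq \nu(Q)$ for every $M \in \cX(H)$, so $\delta \geq \nu(Q)$ is immediate.

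For the upper bound, the idea is to exhibit a single $M \in \cX(H)$ whose distance to the $n$-nilpotents in $\cB(H)$ equals $\nu(Q)$. Since $\dim H \geq n$, fix an $n$-dimensional subspace $K \subseteq H$, identify $\cB(K)$ with $\cM_n$, and let $\widetilde{Q} \in \cB(H)$ be a rank-one projection whose range lies in $K$ (so $\widetilde{Q}$ acts as zero on $K^\perp$). Then $\widetilde{Q} \in \cX(H)$: one checks $\widetilde{Q}\widetilde{Q}\widetilde{Q} = \widetilde{Q}$ and $\widetilde{Q}^*\widetilde{Q} = \widetilde{Q} \geq \widetilde{Q}$. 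By MacDonald's theorem $\nu(Q) = \frac{1}{2}\sec\frac{\pi}{n+2}$, so there exists a nilpotent $N_0 \in \cN_n$ with $\lVert Q - N_0 \rVert = \nu(Q)$ when the norm is computed in $\cM_n = \cB(K)$. Extending $N_0$ by zero on $K^\perp$ yields $\widetilde{N} \in \cB(H)$ with $\widetilde{N}^n = 0$ and $\lVert \widetilde{Q} - \widetilde{N}\rVert = \lVert Q - N_0\rVert = \nu(Q)$, where the last equality uses that the extension by zero is isometric. Therefore $\delta \leq \lVert \widetilde{Q} - \widetilde{N}\rVert = \nu(Q)$.

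There is no real obstacle here; the only mild care needed is the verification that the rank-one projection $\widetilde{Q}$ lies in $\cX(H)$ (for which $\widetilde{Q}$ itself plays the role of the witnessing projection $P$) and the observation that compressing to an invariant finite-dimensional subspace preserves both the operator norm and nilpotency of the extension by zero. Combining the two inequalities yields $\delta = \nu(Q)$.
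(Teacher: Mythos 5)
Your proof is correct and follows essentially the same route as the paper: the lower bound is Theorem~\ref{t2}, and the upper bound embeds the rank-one projection $Q$ into $\cB(H)$ via an $n$-dimensional subspace, checks that the embedded projection lies in $\cX(H)$, and uses that extension by zero preserves the norm and the relation $N^n=0$. The only cosmetic difference is that you invoke an attained minimizing nilpotent $N_0$ (which does exist, since $\cN_n$ is closed and the search can be restricted to a compact ball), whereas the paper works directly with the inequality $\nu_n(Q)\geq\nu_n(VQV^*)$ for the isometric embedding $X\mapsto VXV^*$ and never needs attainment.
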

\begin{proof}
The preceding theorem implies $\delta\geq \nu(Q)$. 
Note that $\nu(Q)=\nu_n(Q)$ because $N^n=0$ for every nilpotent $N\in \cM_n$.
(Here, $\nu_n(Q)$ is the distance from $Q$ to $\{N\in \cM_n\mid N^n=0\}$.)
Take a linear isometry $V\colon \bC^n\to H$. 
Then the map $X\mapsto VXV^*$ is a linear isometry from $\cM_n$ into $\cB(H)$ that preserves the $^*$-structure and product. 
Thus $\nu(Q)=\nu_n(Q)\geq \nu_n(VQV^*)$. 
Since $VQV^*$ is a nonzero projection, it clearly belongs to $\cX(H)$. 
Therefore, $\nu_n(VQV^*)\geq \delta$.
\end{proof}
\medskip

\end{document}